\title{A  linear decomposition  attack  }
\author{Alexei Myasnikov}
\address{Department of Mathematical Sciences\\Stevens Institute of Technology}
\curraddr{}
\email{amiasnikov@gmail.com}
\thanks{The first  author was partially supported by  NSF grant
DMS-1318716 and by Russian Research Fund, project 14-11-00085}
\author{Vitali\u\i\ Roman'kov}
\address{Institute of Mathematics and Information Technologies\\Omsk State Dostoevskii University}
\curraddr{}
\email{romankov48@mail.ru}
\thanks{The  second author  was supported by RFBR, project 13-01-00239a and by Russian Research Fund, project 14-11-00085}
\newtheorem{theorem}{Theorem}[section]
\newtheorem{lemma}[theorem]{Lemma}
\theoremstyle{definition}
\newtheorem{corollary}[theorem]{Corollary}
\newcounter{comcount}
\def\MA{{\mathbf{A}}}
\def\F{{\mathbb{F}}}
\date{}
\begin{document}

\maketitle

\begin{abstract}
We discuss  a new attack, termed a {\em dimension} or {\em linear decomposition}  attack,  on  several known  group-based cryptosystems. 
This attack  gives a polynomial time deterministic algorithm that recovers the secret shared key from the public data in all the schemes under consideration. Furthermore, we show that in this case, contrary to the common opinion,  the typical computational security assumptions are not very relevant to the security of the schemes, i.e., one can break the schemes without solving the algorithmic problems on which  the assumptions are based.  The efficacy of the attack depends on the platform group, so it requires a  more thorough analysis in each particular case. 
\end{abstract}


\section{Introduction}
\label{se:intro}

In this paper we discuss, following \cite{Rom3} and \cite{Rom4}, a new general attack on  several known  group-based cryptosystems.  This attack works when the platform  groups (or some other related groups) are linear. We show that in this case, contrary to the common opinion (and some explicitly stated security assumptions), one does not need to solve the underlying algorithmic problems to break the scheme, i.e.,  there is another  algorithm that recovers the private keys without solving the principal algorithmic problem on which the  security assumptions are based. This changes completely our understanding of security of these scheme. The efficacy of the attack depends on the platform group, so it requires a  specific analysis in each particular case. In general one can only state that the attack is in polynomial time in the size of the data, when the platform and related groups are given together with their linear representations. Of course, this requires additional  explanation in the case of finite groups.  

\subsection{ Motivation} 

The purpose of this paper is three-fold. Firstly, we  describe a new very general  attack, called the {\em dimension} or {\em linear decomposition}  attack,  on several known crypto schemes. The attack is based on elementary properties of finite dimensional linear spaces, to apply it one has to represent the  platform  group  (or semigroup, or algebra)  as a subset of  a finite dimensional linear space.  Furthermore, the schemes themselves should be based on  algorithmic problems of a particular type:    the conjugacy search problem,  the decomposition search problem, the factorization problem, automorphic actions,  - those ones that induce linear transformations on the underlying linear space.  Notice, that we do not require here that the group embeds into the linear space homomorphically. Secondly, we show that the security assumptions of the schemes under consideration (either stated explicitly or tacitly assumed in the description of the systems) do not hold in the case of linear groups or semigroups, or algebras. Indeed, we show how one can recover the private keys without solving the underlying algorithmic problems. Thirdly, we discuss how to improve the schemes in order to fail the dimension attack. The easiest way would be  to   choose only non-linear  groups as the platform groups. However, this would leave many interesting groups, in particular all finite groups, out of  reach.  On the other hand,  the group might be linear, but the dimensions of its linear representations could be so large that  the dimension attacks become inefficient. Not much in general is known about efficiency  of linear representations of groups, most of the general problems are wide open.  In particular, it would be very interesting to study minimal (and efficient)  matrix representation of nilpotent and polycyclic groups,  and their holomorphs, and especially finite groups. We discuss this issues below.

\subsection{Results and the structure of the paper}

In Section 2 we describe some typical  cryptosystems  based on non-commutative groups (semigroups, algebras).  Most of them are best understood as various generalizations of the classical Diffie-Hellman (DH) scheme.   In particular, we discuss   schemes based on  the conjugacy search problem, the decomposition and factorization problems, and actions by automorphisms. It worth to mention here that the famous Anshel-Anshel-Goldfeld scheme \cite{AAG} is not part of our discussion, since, on the one hand,  it is not of the Diffie-Hellman type, and on the other hand, it is currently  under another powerful linear algebra attack from \cite{Tsaban}.

In Section 3 we explain the basic ideas of the linear decomposition attacks and provide some useful algorithms. The algorithms work in  polynomial time in the size of the public data described in the schemes. All these results are rather theoretical: we did not try to improve  neither on the efficacy of the algorithms nor on the complexity bounds, no doubts that both could be tighten up considerably.  

In Section 4 we give cryptanalysis of a variety of group-based protocols of the Diffie-Hellman type.  There are two main results here.  The first one claims that  the linear decomposition attacks give a polynomial time deterministic algorithms that recover the secret shared key from the public data in all the schemes we discuss in the section. The second one shows that in all these schemes the typical computational security assumptions are not very relevant to the security of the schemes, i.e., one can break the schemes without solving the algorithmic problems on which the  assumptions are based.  This is rather striking since the schemes together with the assumptions were known for some time and studied quite thoroughly.

\subsection{Efficacy of linear decomposition attacks}

As we have mentioned above  some of the DH-like schemes  based on group theoretic problems are susceptible to linear decomposition attacks, provided that the platform groups (or some other related groups) are linear. One of the possible ways to fail the attack is to use non-linear groups as platforms. However, this would leave behind all finite groups, which are known to be  linear.  Another way is to allow linear groups $G$ as platforms, but only those ones whose   smallest faithful linear representations $G \to GL_n(\F)$ have "prohibitively  high" dimensions $n$ that makes dimension attacks implausible. Of course, if the platform group $G$ is fixed  then the dimension of any fixed linear representation of $G$ is  a constant, so the dimension attack still works in polynomial time (though the constants in the polynomials could be very large).  To make the situation more clear one has to consider a class $\mathcal C$ of platform groups such that the dimension $n(G)$ of a smallest linear representation of $G$ grows exponentially with respect to the size of the given description of $G$ (for example,  with respect to the size of the given finite presentation of $G$ in terms of generators and relators). This approach accommodates naturally any infinite class $\mathcal C$ of finite groups. 

Notice, that the class of linear groups is very large. Besides finite groups it contains finitely generated nilpotent groups, arbitrary polycyclic groups, right angled Artin and braid groups, holomorphs of polycyclic groups, etc. Not all finitely generated metabelian groups are linear, but all of them admit faithful representations into finite direct products of general linear groups. Surprisingly,  not much is known about the dimensions of smallest faithful linear representations of such infinite groups as nilpotent, or polycyclic, even less about  metabelian groups.  
Graaf and Nickel \cite{GN} and  Nickel in \cite{N} studied the classical linear representations of finitely generated torsion-free nilpotent groups $G$ into $UT_n(\mathbb{Z})$ from the algorithmic view-point, showing in particular that there are polynomial time (in the Hirsh length of $G$) algorithms to construct the representations.  Recently, in \cite{HK}  Habeeb and Kahrobaei, following \cite{N}, showed that the dimension of the  classical representations above is $O(n^2)$ where $n$ is the Hirsh length of the group.   It seems  the dimension is not prohibitively high in this case. However, there nilpotent groups with presentations (in generators and relators)  which size is logarithmically smaller then their Hirsh length. Indeed, free nilpotent groups of class $c$ and rank $r$ have presentations with $r$ generators and no relations in the variety of nilpotent groups of class at most $c$, but their Hirsh length is about $r^{c+1}$. On the other hand, to use a group $G$ as a platform in cryptography  requires a fast solution of the word problem in $G$, as well as fast algorithms for computing normal forms of group elements (see \cite{MSU2} for details). It seems the known algorithms for computing normal forms in nilpotent groups rely on so-called polycyclic presentations of nilpotent groups, which size is about the same as the Hirsh length of the group.  Situation with finite nilpotent groups is even more interesting. Some upper bounds for dimensions of smallest linear representations of say finite $p$-groups are known: in \cite{J} Janusz showed that the minimal faithful representation of a finite p-group as a group of matrices over a finite field of characteristic $p$ is $1 + p^{(e - 1)}$ where $e$ is the exponent of $G$, so the degree here is quite large (for some groups it is comparable with the order of the group). 

The discussion above  highlights several interesting open problems in algorithmic group theory, whose solution would shed some light on security of the corresponding cryptosystems.

\section{Crypto schemes under the dimension attack}
\label{se:schemes}

Now we describe  cryptosystems which to some extent are susceptible to  the dimension attack. But first  a few words on terminology. Most of the crypto schemes (systems, protocols) discussed below were not originally stated within the formal rules of the current cryptographic practice (see, for example,  the book \cite{KL} for definitions of a cryptosystem)  - usually some of the required algorithms or parameters  are not completely described, or security assumptions are missing.  In particular, it is hard to break such schemes because the precise description is lacking. However, they make perfect sense as general ideas or "general schemes" from which the concrete protocols should be worked out after some research on sorting out which parameters  are strong and which are not.  The research itself usually comes as a series of "attacks" on the scheme and the subsequent cryptanalysis. Our main intention in designing the dimension attack is not on breaking but on improving the generic schemes at hands. In what follows we are focusing on the choice of the so-called {\em platform group}, one of the main parameters in the group-based cryptosystems, and discuss how security of the scheme depends on the chosen platform.  In particular, we shed some light on some known (computational) security assumptions.

Most of the schemes we discuss in this paper the best can be seen  in the light of the  famous Diffie-Hellman (DH) key establishment protocol \cite{DH}.  The main idea of DH  is very simple and can be described  as follows. Two users, say Alice and Bob, first choose the multiplicative group of integers $G = \mathbb{Z}^{\ast}_p$ modulo a prime number $p$, as the platform group,  and some element $g \in G$  (all this data is public).  Alice then  selects  a random   integer $k \in \mathbb{N}$ (her private key)  and  sends  $g^k$  to Bob through a public channel. He  in turn picks  $l \in \mathbb{N}$  (his private key) and sends  $g^l$   to Alice through a public channel. Both Alice  and Bob  can then
compute their secret shared keys  $K = g^{kl}$. 
An  adversary, say Eve,  monitoring the transmission between Alice  and Bob knows the public data $G, g, g^k, g^l$  and her task is to recover the shared key   $K$ (i.e., to break the scheme).  The {\em computational security assumption}  claims that  it is a time consuming task to recover the shared key from the public data. Namely,  the claim is that for a fixed $G$, any probabilistic polynomial-time algorithm succeeds in breaking the scheme with only negligible probability. We refer to \cite{KL} for precise formulations.   Observe, that  if Eve  could compute either $k$  or $l,$ then she  could find $K$ easily. Thus we arrive to the {\em underlying algorithmic problem}: recover Alice's (or Bob's) private key from the public data. In the case of Diffie-Hellman the underlying algorithmic problem is the famous {\it  discrete logarithm problem}   for $G$, which asks whether or not one can compute (minimal non-negative) $k$ from given $g$ and $g^k$ in probabilistic Ptime. For other schemes the underlying algorithmic problems could be  different.  Notice, that it is important how the group $G$ and its elements are given: algebraically the same cyclic group of order $p$ (and its elements) could be given as integers between $0$ and $p-1$, or as the group of an elliptic curve, or  by a finite presentation, - the scheme security depends on the presentation. Furthermore, the scheme still makes sense when the group $\mathbb{Z}^{\ast}_p$ is replaced by an arbitrary finite (or infinite) group $G$. This gives rise to a general  DH scheme. Various  platform groups $G$ were suggested and studied, finite or not:  the group of non-singular matrices over a finite field \cite{COS}, \cite{OVS},  over a group algebra \cite{KKS}, over a semigroup \cite{GS}, etc.

Now we discuss  some group based cryptoschemes with respect to the  algorithmic problems they are based on.

\medskip \noindent
{\bf Schemes based on the conjugacy search problem}.
One of the possible generalizations of the general DH scheme to arbitrary non-commutative group is to use the conjugacy in the place of exponentiation.  Recall, that the  conjugate  of an element $g$ by an element $x$ in a group  is defined by $g^x = xgx^{-1}.$  The map $\phi_x:g \to g^x$ is an automorphism of $G$, called {\em conjugation}.
One of the principal  DH-type schemes based on the conjugation (instead of exponentiation) was introduced by  Ko, Lee et. al. in  \cite{KLCHKP}.  In this case let $G $ be a platform group and $U, W$ be two finite subsets of $G$ which are commuting element-wise, i.e., $uw = wu$ for any elements $u \in U, w \in W$. Denote by  $A$ and $B$ the  subgroups of $G$ generated by $U$ and $W$ correspondingly and fix an element $g \in G$. All this data is assumed to be public.  Then  Alice picks a private element $a \in A$  and publishes   $g^a$. Bob picks a private element $b \in B$ and  publishes  $g^b.$ After that 
Alice computes  a shared secret $K_A = (g^b)^a = g^{ab}$, while  Bob computes  the same element as $K_b = (g^a)^b = g^{ba} = g^{ab}.$  In this scheme the following is the underlying algorithmic problem (like the discrete log for DH scheme):
 
\begin{itemize}
\item {\em The conjugacy search  problem (CSP) in a group $G$:  given two elements 
$g, f \in G$ and information that $g^x = f$ for some $x \in G,$ find at least one particular element $x$ like that.}
\end{itemize}

The CSP plays a special role in group-based cryptography. Many  protocols  and cryptosystems based on groups use one or another variation  of CSP.  For example, the schemes  \cite{KK, KLCHKP, SU3,WWCOS,GS}   use  CSP; \cite{AAG, AAGL} use  the simultaneous CSP (when one has to solve in a group $G$ a system of the type $g_1^x = f_1, \ldots, g_k^x = f_k$).

\medskip \noindent
{\bf Schemes based on the decomposition and factorization problems}. Again,  DH scheme in a group $G$ can be simulated by replacing exponentiation by right and left multiplication. One of the most typical schemes in this area is due to Shpilrain and Ushakov  \cite{SU4}.  In the notation above the idea of the scheme is as follows. A group $G$, two element-wise commuting subgroups $A$ and $B$, and a fixed  element  $g \in G$ are given. Then Alice picks private elements  $a, a' \in A$ and publishes  the element $aga'.$  Bob picks private elements $b, b' \in B$  and  publishes the element $bgb'.$ After that 
Alice computes  a shared secret key  $K_A = abgb'a'$, while  Bob computes  the same element as $K_B = baga'b' = abgb'a'.$ In this scheme the underlying algorithmic problem is the decomposition search problem.

\begin{itemize}
\item 
{\em The decomposition search problem (DSP) in a group $G$:  given two subgroups $A,B \subseteq G$ and two elements $g, f \in G, $ find elements $a \in A$ and $b \in B$ such that  $a\cdot g \cdot b = f,$  provided  that at least one such pair of elements exists.}
\end{itemize}

There are two  variations of DSP, which have been used in group-based crypto:

\begin{itemize}
\item
{\em The  factorization search problem (FSP) in a group $G$: given an element $f \in G$ and two subsets (usually subgroups)  $A$ and $B$ of $G$, find elements $a \in A$ and $b \in B$ such that $a \cdot b = f.$}

\item 
{\em The  power conjugacy search problem (PCSP)  in a group $G$: given two elements 
$g, f \in G$ and information that $(g^k)^x = f$ for some $k \in \mathbb{N}$ and $x \in G,$ find at least one particular pair $(k, x)$ like that. }
\end{itemize}

Here are some schemes based on DSP and FSP:
\cite{AMVZ}, \cite{ATVZ1}, \cite{ATVZ2},  \cite{RU}, \cite{Shp}, \cite {SU4},    \cite{SU2},  \cite{SCY},  \cite{Sti}. The schemes \cite{KK}, \cite{STR} use PCSP  in matrix groups.

\medskip \noindent
{\bf Schemes using actions by automorphisms}. One natural generalization of DH scheme is to replace exponentiation by arbitrary commuting automorphisms of the group $G$. When the automorphisms are conjugations one gets the scheme \cite{KLCHKP} described above. More precisely, let $G$ be a group   and $g \in G$. By $Aut(G)$ we denote the group of automorphisms of $G$. For $g \in G$ and $\phi \in Aut(G)$ by $g^\phi$ we denote the image of $g$ under $\phi$.  Suppose $U, W$ be two finite subsets of  $Aut(G)$ commuting element-wise. Denote by $A$ and $B$ the subgroups in $Aut(G)$ generated by $U$ and $W$ respectively.  Now 
Alice picks $a \in  A $ and publishes $g^a.$ Bob picks $b \in B$ and publishes  $g^b.$ Then  Alice computes  a secret shared key $K_A = (g^b)^a = g^{ba}$, while  Bob computes the same element as $K_B = (g^a)^b = g^{ab} = g^{ba}$.  The underlying algorithmic problem is the search automorphism problem:

\begin{itemize}
\item  {\em The search automorphism problem in $G$: given a subgroup $A \leq Aut(G)$ and two elements $g,h \in G$ find an automorphism $a \in A$ such that $g^a = h$, provided that such an automorphism exists. }
\end{itemize}

Schemes from \cite{ER}, \cite{GS},  \cite{Mah} use the search automorphism (or endomorphism) problems in their design.

\section{The principle idea}
\label{se:bil}

In this section we describe the mathematical idea behind the linear decomposition attacks. Our exposition is closely linked to the "prototypical" schemes discussed in Section \ref{se:schemes}. 

\subsection{Finding a basis}

Let $V$ be a finite dimensional vector space over a field $\mathbb{F}$  with basis $\mathcal{B} = \{v_1, \ldots,v_r\}$.  Let $End(V)$ be  the  semigroup of endomorphisms of $V.$  We assume that  elements $v \in V$ are given as vectors relative to $\mathcal{B}$, and endomorphisms $a \in End(V)$ are given by their matrices relative to $\mathcal{B}$. 
For an endomorphism $a \in End(V)$ and an  element $v \in V$ we denote by  $v^a$ the image of $v$ under $a.$  Also, for any subsets $W \subseteq V$ and $A \subseteq End(V)$ we put $W^A = \{w^a | w \in W, a \in A\}$, and  denote by $Sp(W)$ the subspace of $V$ generated by $W$, and by   $\langle A \rangle$ the submonoid generated by $A$ in $End(V)$.

The discussion below concerns with time complexity of some algorithms. To this end we put some assumptions on computations in $\F$.

\medskip \noindent
{\bf Computational assumption on the fields } {\it We assume that elements of the field $\F$ are given in some constructive form and the "size" of the form is defined. Furthermore, we assume that the basic field operations in $\F$ are efficient, in particular they can be performed in polynomial time in the size of the elements. In all the particular protocols considered in this paper  the filed $\F$ satisfies all  these conditions. }

\medskip
For an element $\alpha \in \F$ we write $\Vert \alpha \Vert$ for the size of 
$\alpha$ and put $\Vert v \Vert = \max{\Vert \alpha_i\Vert}$ for a vector $v = (\alpha_1, \ldots, \alpha_r) \in V$, and  $\Vert a\Vert  = \max\{\Vert\alpha_{ij}\Vert \}$ for a matrix $a= (\alpha_{ij}) \in End(V)$. 

\begin{lemma} [Principal Lemma] \label{le:principal}
There is an algorithm that for given finite subsets $W \subseteq V$ and $U  \subseteq End(V)$ finds  a basis of the subspace $Sp(W^{\langle U\rangle})$   in the form $w_1^{a_1}, \ldots, w_t^{a_t}$, where $w_i  \in W$ and $a_i$ is a  product  of elements  from $U$.  Furthermore, the number of field operations used by the algorithm is polynomial in $r = \dim_\F V$ and the cardinalities of  $W$ and $U$.
\end{lemma}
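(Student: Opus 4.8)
The plan is to identify $Sp(W^{\langle U\rangle})$ as the smallest subspace of $V$ that contains $W$ and is invariant under every endomorphism in $U$, and then to compute a basis for it by an orbit-saturation procedure that carries along \emph{witnesses} of the form $w^a$ with $w\in W$ and $a$ a product of elements of $U$. The key observation is that a subspace on which every $u\in U$ acts is automatically invariant under the whole monoid $\langle U\rangle$ (compose actions) and, by linearity of each $u$, invariance need only be checked on a spanning set.

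Concretely, I would first run Gaussian elimination on the finite list $W$ and extract a maximal linearly independent subset $b_1,\dots,b_k$ of $W$; this is a basis of $Sp(W)$, and each $b_j$ is stored together with its trivial witness $b_j=w_{i_j}^{\,1}$ (the empty product of elements of $U$, which lies in $\langle U\rangle$ since the latter is a monoid). Then I would run the saturation loop: process the current basis vectors one at a time; when processing $b_i$ with witness $w^a$, for each $u\in U$ form the matrix–vector product $c=b_i^{\,u}$, attach to it the witness $w^{au}$, and test by linear algebra whether $c\in Sp(b_1,\dots,b_k)$ for the current basis; if not, append $c$ to the basis (so $k$ grows by one). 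The loop halts once every basis vector has been processed.

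For correctness, when the loop halts the final list $B=\{b_1,\dots,b_k\}$ is linearly independent, $Sp(B)\supseteq Sp(W)$ (it contains a spanning subset of $Sp(W)$), and $b_i^{\,u}\in Sp(B)$ for every $i$ and every $u\in U$; here one uses that $Sp(B)$ only grows during the run, so a membership verified at an earlier stage still holds at the end. By linearity this makes $Sp(B)$ invariant under each $u\in U$, hence under $\langle U\rangle$, so $W^{\langle U\rangle}\subseteq Sp(B)$ and therefore $Sp(W^{\langle U\rangle})\subseteq Sp(B)$. Conversely each $b_i=w_i^{a_i}$ with $a_i$ a product of elements of $U$ belongs to $W^{\langle U\rangle}$, whence $Sp(B)\subseteq Sp(W^{\langle U\rangle})$; equality follows, and $B$ is a basis of the desired form.

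For the complexity, the basis size $k$ is non-decreasing and bounded by $r=\dim_\F V$, so at most $r$ vectors are ever appended and the saturation loop processes at most $r$ basis vectors, each triggering $|U|$ matrix–vector products and $|U|$ membership tests. Keeping the basis in reduced row-echelon form makes each membership test and each insertion cost $O(r^2)$ field operations, each product costs $O(r^2)$, and the initial elimination on $W$ costs $O(|W|\,r^2)$, for a total of $O\big((|W|+r|U|)\,r^2\big)$ field operations, polynomial in $r,|W|,|U|$. The only genuinely delicate point — and the one I expect to need care — is not conceptual but bookkeeping: tracking the witnesses $w^{au}$ correctly, justifying that "already verified memberships persist" so that a single left-to-right pass over the growing basis suffices, and (using the computational assumption on $\F$) checking that the sizes $\Vert\cdot\Vert$ of the intermediate field elements stay polynomially bounded so that the bit complexity is polynomial too.
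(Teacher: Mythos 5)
Your proposal is correct and follows essentially the same saturation argument as the paper: start from a maximal linearly independent subset of $W$, repeatedly apply elements of $U$ while tracking witnesses $w^a$ and testing linear independence, and stop when the span stabilizes (which must happen within $r$ steps since the subspaces strictly increase). Your worklist formulation, the persistence-of-membership observation justifying a single pass, and the resulting bound $O\bigl((|W|+r|U|)r^2\bigr)$ are all sound, and in fact slightly sharper than the paper's admittedly crude estimate $O(r^3|U|^2+r|W|^2)$.
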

\begin{proof}
Using Gauss elimination one can effectively find  a maximal linearly independent subset $L_0$ of $W$. Notice that $Sp(L_0^{\langle U\rangle}) = Sp(W^{\langle U\rangle})$. Adding to the set $L_0$  one by one elements $v^a$, where $v \in L_0, a \in U$ and checking every time  linear independence of the extended set, one can effectively construct a maximal linearly independent subset $L_1$ of the set $L_0 \cup L_0^U$ which extends the set $L_0$. Notice that $Sp(L_0^{\langle U\rangle}) = Sp(L_1^{\langle U\rangle})$ and the elements in $L_1$ are of the form $w^a$, where $w \in W$ and $a \in \langle U\rangle$. It follows that if $L_0 = L_1$ then $L_0$ is a basis of $Sp(W^{\langle U\rangle})$. If $L_0 \neq L_1$ then we repeat the procedure for $L_1$ and find a maximal linearly independent subset $L_2$ of $L_1 \cup L_1^U$ extending $L_1$. Keep going one constructs a sequence of strictly increasing subspaces $L_0 < L_1 < \ldots < L_i$ of $V$. Since the dimension $r$ of $V$is finite  the sequence stabilizes for some $i \leq r$. In this case  $L_i$ is a basis of  $Sp(W^{\langle U\rangle})$ and its elements are in the required form. 

To estimate the upper bound of the number of the field operations used by the algorithm, observe first that the number of the field operations in Gauss elimination performed on  a matrix of size $n\times r$ is $O(n^2r)$. Hence it requires at most $O(n^2r)$ steps to construct $L_0$ from $W$, where $n = |W|$ is the number of elements in $W$. Notice that $|L_j| \leq r$ for every $j$. So to find $L+{j+1}$ it suffices to perform Gauss elimination on the matrix corresponding to $L_j \cup L_j^U$ which has size at most $r +r|U|$.
Thus the upper estimate on this number is $O(r^3|U|^2)$. Since there are at most $r$ iterations of this procedure one has the total estimate as $O(r^3|U|^2 +r|W|^2)$. Of course, this estimate is very crude.
\end{proof}

\begin{corollary}
With our assumptions on the field $\F$ the algorithm in Lemma \ref{le:principal} works in polynomial time in the size of the inputs, i.e., in $r = \dim_\F V$,  $|W|$, $|U|$, and $\max\{\Vert w\Vert, \Vert u\Vert \mid w \in W, u \in U\}$.
\end{corollary}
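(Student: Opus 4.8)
The plan is to upgrade the operation count of Lemma~\ref{le:principal} to a genuine time bound by controlling the size (in the sense fixed for $\F$) of every field element the algorithm writes down. Put $N = r + |W| + |U| + \max\{\Vert w\Vert, \Vert u\Vert \mid w\in W,\ u\in U\}$. By Lemma~\ref{le:principal} the algorithm performs $O(r^3|U|^2 + r|W|^2)$ field operations, which is polynomial in $N$, and by the computational assumption on $\F$ each operation runs in time polynomial in the size of its operands; so it is enough to exhibit a polynomial $q$ with the property that every field element arising in a run has size at most $q(N)$.

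I would then classify those elements. The algorithm only handles (a) coordinates of the input vectors in $W$ and entries of the input matrices in $U$; (b) coordinates of vectors $w^{a}$ with $w\in W$ and $a = u_{j_1}\cdots u_{j_m}$ a product of elements of $U$; and (c) auxiliary quantities produced by the Gauss-elimination subroutine that tests linear independence. For (b) the key point is that the strictly increasing chain $Sp(L_0) < Sp(L_1) < \cdots < Sp(L_i)$ has length $i\le r$, so every vector formed is $w^{a}$ with $a$ a product of at most $r$ elements of $U$; computing such an image is a sequence of $m\le r$ matrix-vector multiplications in dimension $r$, and for all the fields occurring in this paper (finite fields, $\mathbb{Q}$, rational function fields over them) the size of a product or sum grows at most additively, up to an $O(\log r)$ term from the $r$-fold sums inside one matrix-vector product. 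Hence by induction on $m$ the coordinates of $w^{a}$ have size $O\!\big(r(\Vert w\Vert+\Vert u\Vert)+r\log r\big) = O(q(N))$; over a finite field this is trivial, since sizes there are bounded by a constant. For (c) I would invoke the standard fact that the entries occurring in Gaussian elimination of a matrix are ratios (exact quotients, in the fraction-free Bareiss variant) of minors of that matrix, and that by Hadamard's inequality a $k\times k$ minor of a matrix with entries of size $\le s$ has size $O(ks + k\log k)$. The matrices fed to the subroutine have at most $r + r|U|$ rows, $r$ columns, and entries of size $O(q(N))$ by (b), so all quantities produced remain of size $\mathrm{poly}(N)$.

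Combining (a)--(c), every field element written down during a run has size bounded by a single polynomial in $N$; multiplying the polynomial number of operations by the polynomial cost of each gives a polynomial total running time, as claimed. The only real subtlety is the size bound in (b): a naive ``the output is polynomial in the input'', iterated $r$ times, would blow the size up exponentially in $r$, so it matters that for the concrete fields in question the blow-up per multiplication is additive rather than multiplicative, costing only a factor $r$ overall; the corresponding issue for the elimination steps in (c) is sidestepped by the minor/Hadamard bound, which supplies a direct estimate in place of the recursive one.
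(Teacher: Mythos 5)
Your argument is correct, and it does more than the paper does: the paper states this corollary with no proof at all, treating it as an immediate consequence of the operation count in Lemma~\ref{le:principal} together with the blanket assumption that field operations run in polynomial time in the size of their operands. As you correctly observe, that composition is not automatic -- ``polynomial output size'' iterated $r$ times can be exponential -- so the real content of the corollary is precisely the coefficient-growth analysis you supply: the chain $L_0 < L_1 < \cdots$ has length at most $r$, so every stored vector is $w^{a}$ with $a$ a product of at most $r$ generators, built incrementally by one matrix--vector multiplication per stage, and for the concrete coefficient domains in play (finite fields, $\mathbb{Q}$, rational function fields) each such step increases sizes only additively up to an $O(\log r)$ term; the Gauss-elimination subroutine is then tamed by the standard minors/Hadamard bound. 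Your restriction to the fields actually occurring in the paper is also the honest move: under the paper's purely abstract computational assumption on $\F$ the statement as written would not follow, and the paper itself hedges by remarking that all protocols it considers use fields where ``all these conditions'' hold. In short, your proof is a correct and genuinely needed completion of an argument the paper leaves implicit; the only cosmetic point is that your bound $O\bigl(r(\Vert w\Vert+\Vert u\Vert)+r\log r\bigr)$ is slightly loose (the $\Vert w\Vert$ contribution enters once, not $r$ times), which of course does not affect polynomiality.
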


Notice, that the algorithm just described in lemma \ref{le:principal} can be obviously adapted to noetherian modules over commutative rings.  

\subsection{The basic linear decomposition attack}
\label{se:basic}

Let as above  $V$ be a finite dimensional vector space over a field $\mathbb{F}$  with basis $\mathcal{B} = \{v_1, \ldots,v_r\}$ and   $U$ and $W$ be  two finite subsets of $End(V)$. 

\medskip \noindent
{\bf Commutativity assumption} {\it We assume that every element of $U$ commutes with every element of $W$, i.e., for every $v \in V, u \in U, w \in W$ one has $v^{uw} = v^{wu}$. }

\medskip \noindent
 Let $A$ and $B$ be the submonoids of $End(V)$ generated by $U$ and $W$ correspondingly.   Suppose  that $a \in A, b \in B$ and $v \in V$. We assume that the field $\F$, the space $V$, the sets $U, W$ and the vectors $v, v^a, v^b$ are public, while the endomorphisms  $a$ and $b$ are private. By the size of the public data we mean the total size of the following parameters: $r = \dim_\F V$ (given in unary, i.e., as $1^r$),  $|W|$, $|U|$, $\max\{\Vert w\Vert, \Vert u\Vert \mid w \in W, u \in U\}$, $\Vert v\Vert,  \Vert v^a\Vert,  \Vert v^b\Vert$.

\medskip
{\bf Claim 1.} {\it Given $U, W, v, v^a, v^b$ one can find in polynomial time (in the size of the public data) the vector $v^{ab} = v^{ba}$.}

\begin{proof} Indeed, given $U$ and $v$ by Principal Lemma (and its corollary) one can find in polynomial time a basis of $Sp(v^A)$ in the form $v^{a_1}, \ldots, v^{a_t}$, where $a_i \in A$ given as some particular products of elements from $U$. Using Gauss elimination one can decompose $v^a$ as a linear combination in the given basis:
$$
v^a = \Sigma_{i = 1}^t \alpha_i v^{a_i}, \ \ \ \alpha_i \in \F.
$$
This allows one to compute $v^{ab}$ as follows:
$$
v^{ab} = (v^a)^b = (\Sigma_{i = 1}^t \alpha_i v^{a_i})^b = \Sigma_{i = 1}^t \alpha_i v^{a_ib} = \Sigma_{i = 1}^t \alpha_i v^{ba_i} = \Sigma_{i = 1}^t \alpha_i (v^b)^{a_i}.
$$
which is immediate, since the vector $v^b$ and the matrices $a_i$ are known. 

The conclusion is that one does not need to find neither $a$ nor $b$ to compute the vector $v^{ab}$.
\end{proof}

\subsection{A linear group acting by  conjugation}
\label{se:linear}

Let $G$ be a finitely generated group that comes equipped with an injective homomorphism $\phi:G \to GL_n(\MA)$, where $\MA$ is a finite dimensional associative algebra over a field $\F$. 

\medskip \noindent
{\bf Computational assumption on $\MA$} {\it As usual we assume that elements of $\MA$ are given in some constructive form and the "size" of the form is defined. Furthermore, we assume that the basic algebra  operations in $\MA$ are efficient, so  they can be performed in polynomial time in the size of the elements. In particular, the matrix multiplication in $Mat_n(\MA)$ can be performed in polynomial time.  Of course, all the conditions above obviously hold in the case when $\MA$ is just the field $\F$.}

  \medskip
  Since the group $G$ is finitely generated and multiplication  in $GL_n(\MA)$ is efficient one can compute in polynomial time the image $g^\phi$ for any element $g \in G$, given as a word in a fixed finite set of generators of $G$. 
  
  Notice, that $V = Mat_n(\MA)$ can be viewed as a finite dimensional vector space over $\F$, where matrices from $Mat_n(\MA)$ are tuples  of length $n^2$ over $\MA$, i.e., elements from $\MA^{n^2}$. If $\MA$ has dimension $r$ over $\F$ then $\MA^{n^2}$ can be viewed as a vector space over $\F$ of dimension $rn^2$ in which addition naturally comes  from  the matrix addition in $Mat_n(\MA)$. The group $GL_n(\MA)$ acts on $V$ by left as well as right multiplication. In both cases the homomorphism $\phi$ gives a faithful representation $\phi:G \to End(V)$. It follows that any two given elements $g, h \in G$ determine an endomorphism  $E_{g,h}:  Mat_n(\MA) \to Mat_n(\MA)$ defined by  $E_{g,h}(x) =  \phi(g)x\phi(h)$.
  In particular, the conjugation by $g \in G$ in $Mat_n(\MA)$ corresponds to the endomorphism $E_{g,g^{-1}}$.

Let  $U$ and $W$ be  two finite subsets of $G$ satisfying the {\it commutativity assumption} as above, i.e.,  every element of $U$ commutes with every element of $W$.  Let $A$ and $B$ be the submonoids of $G$ generated by $U$ and $W$ correspondingly.  Suppose that $a \in A$,  $b \in B$, and $v \in G$.  Put $v^a = ava^{-1}, v^b = bvb^{-1}$.  

We also assume that  the algebra $\MA$, the group $G$, the embedding $\phi:G \to GL_n(\MA)$, the sets $U$ and $W$, as well as the elements $v, v^a, v^b$ are public. As above the size of the public data is the total size of the following parameters: $r$ and $n$ (given in unary), the sizes of $U$ and $W$, and the sizes of all public elements. Notice, that in this case we may assume that elements of $G$ are given as words in a fixed finite generating set. Due to our assumptions on $\MA$ and $G$ the embedding $\phi$ is computable in polynomial time in the length of the words representing elements in $G$, therefore  the sizes of elements of $G$ computed as lengths of the words or the norms of the corresponding matrices are within the polynomial bounds of each other. This implies that the time complexity estimates for our algorithms will be similar if we use representations of the elements as words or as the corresponding matrices.

\medskip
{\bf Claim 2.} {\it Given $U, W, g, g^a, g^b$ one can find in polynomial time (in the size of the public data) the element $g^{ab} = g^{ba}$.}

\begin{proof}
Indeed, the argument above shows that the embedding $\phi:G \to Mat_n(\MA)$ gives, in fact, an embedding $\phi:G \to End(V)$ in such a way that conjugation by an element $g \in G$ gives rise to an endomorphism $E_{g,g^{-1}} \in End(V)$. Since the embedding $\phi$ is polynomial time computable one  finds himself in the situation of the basic linear decomposition attack. Now Claim 2 follows immediately from Claim 1. 
\end{proof}

There are several possible variations or generalizations of the basic scheme described in this section, which also could be easily reduced to the basic model. We mention  some of them below.  

\subsection{ A linear group acting by right/left multiplication}
\label{se:right/left}

We assume all the notation from Section \ref{se:linear}.
Beyond that assume also that $a,a^\prime \in A$ and $b, b^\prime \in B$.  

\medskip
{\bf Claim 3.} {\it Given $U, W, g, agb, a^\prime gb^\prime$ one can find in polynomial time (in the size of the public data) the element $a^\prime a gbb^\prime = aa^\prime gb^\prime b$.}

\begin{proof}
Indeed, an argument similar to the one in Section \ref{se:linear}  reduces Claim 3 to the basic model. 
\end{proof}

Obviously, Claim 2 is just a particular case of Claim 3. Observe also, that Claim 3  holds if one replaces a group $G$ by a semigroup   $G$. The same argument works in this case as well.

\subsection{Groups acting by automorphisms}
\label{se:auto}

Let $G$ be a finitely generated group and $Aut(G)$ the group of automorphisms of $G$. Let  $U$ and $W$  be  two    finite subsets of $Aut(G)$ satisfying the commutativity assumption as above:  every element $u$ of $U$ commutes with every element $w$ of $W$ in $Aut(G)$,  i.e., for any $g \in G$ the equality $ g^{uw} = g^{wu}$ holds.

Denote by $A$ and $B$  the submonoids (or subgroups)  of $Aut(G)$ generated by $U$ and $W$ correspondingly. Now one can consider an analog of the situation described in Section \ref{se:linear}, where the  conjugations are replaced by arbitrary automorphisms. Namely, suppose some automorphisms $a \in A, b \in B$ are chosen. The question arises weather there is a polynomial time algorithm which when given $U \subseteq Aut(G), W \subseteq Aut(G)$, $g \in G$,  and the images $g^a$ and $g^b$ for some elements $a \in A, b \in B$ computes the element  $g^{ab} = g^{ba}$ in $G$. Even if the group $G$ is linear (and the embedding $\phi:G \to GL_n(\MA)$ is given) still in this case there is no obvious reduction to Claims 1 or 2. Indeed, in this case  arbitrary automorphisms from $Aut(G)$ do not in general induce  endomorphisms on the linear space $V$ (in the  notation above).  However the reduction would be possible if one can interprete the automorphisms as conjugations in some (perhaps larger) linear group.  Now we discuss one group theoretic construction that can be useful here.

Recall that the {\em holomorph} $H(G)$ of a group $G$ is a semidirect product $H(G) = G \rtimes Aut(G)$ of $G$ and $Aut(G)$, where the multiplication  on pairs from $G \times Aut(G)$ is defined by $(g,a)(h,b) = (gh^a,ab)$. By construction the groups $G$ and $Aut(G)$ embed into $H(G)$ via injections $g \to (g,1)$ and $a \to (1,a)$. Notice, that every automorphism $a \in Aut(G)$ acts on $G$ by a conjugation in $H(G)$, since $(1,a)(h,1)(1,a^{-1}) = (h^a,1)$.   It follows that if the holomorph $H(G)$ is a linear group, in particular if there is an  injective homomorphism $\phi:H(G) \to GL_n(\MA)$ for some $n$ and $\MA$ as above, then this case can be reduced to Claim 2. In particular, the following result holds.

\medskip
{\bf Claim 4.} {\it Suppose that $H(G)$ is a linear group. Then in the notation above given $U, W, g, g^a, g^b$ one can find in polynomial time (in the size of the public data) the element $(g^a)^b = (g^b)^a$. }

\medskip
Observe, that the holomorph $H(G)$ is linear when the group  $G$ is finite or polycyclic-by-finite \cite{Mer}.

Suppose, a platform group $G$ = gp$(g_1, ..., g_r)$ is given by its generators and defining relations. To apply the linear decomposition we need in  effective embedding $\mu $ of $G$ into a linear group GL$_n(\mathbb{F}).$  Let $\mu (g)$ be a secret date that we get applying our approach to $\mu (G).$ We have to recover $g$ as the result in the original language. Suppose, there is an effective procedure of rewriting $\mu (g)$ as a word $w(\mu (g_1), ..., \mu (g_r))$ in the images $\mu (g_i), i = 1, ..., r,$ of generators. Then $g = w(g_1, ..., g_r)$ and we succeed. There are several papers giving such algorithms. See \cite{BBS} and references there. Let us to cite from this paper: "A constructive membership  test not only answers the question whether or not a given element belongs to a given group but in the case of positive answer, it also provides a straight-line program that constructs the given element from the given generators of the group."

Recall, that constructive membership is the problem of expressing an element in terms of the generators of the group.

 Let $G$ be a group and $S \subseteq G.$ A straightline program reaching some $g \in G$ from $S$ is a sequence $(w_1, ..., w_m), w_i \in G,$ such that for each $i$ either $w_i\in S$ or $w_i = w_j^{-1}$ for some $j < i$ or $w_i =w_jw_k$ for some $j, k < i.$ 

Let $G \leq H$ be groups; let $G$ be given by a generating set $S.$ The constructive membership problem for $G$ in $H$ is, given $g \in H,$ decide whether $g \in G,$ and if so find a straight-line program over $S$ reaching $g.$

 There is a randomized polynomial-time algorithm which uses number theory oracles and given a matrix group $G$ of odd characteristic $p$ solves constructive membership in $G.$ 

Previously similar results were given by E.M. Luks \cite{Luks}  for solvable matrix groups only.    Luk's algorithms are deterministic. Other algorithms and their analysis build on a large body of prior work and most notably on the papers \cite{BPS}, \cite{PW}, \cite{HLBRW}.

\section{Cryptanalysis of protocols}
\label{se:bil}

\subsection{Protocols based on conjugation}

\medskip
\paragraph{1)  Ko, Lee et. al. key establishment protocol} \cite{KLCHKP}.

\medskip
Let $G $ be a group and $U, W$ be two finite subsets of $G$ which are commuting element-wise. Denote by  $A$ and $B$ the  subgroups of $G$ generated by $U$ and $W$ correspondingly. Fix an element $g \in G$.   
We assume that all the data above is public. 

\medskip
{\it Algorithm.} Alice picks a private element $a \in A$  and publishes  $g^a$. Bob picks a private element $b \in B$ and  publishes  $g^b.$

\medskip
{\it  Key establishment.}  
Alice computes $K_A = (g^b)^a = g^{ab}.$ Bob computes  $K_b = (g^a)^b = g^{ba} = g^{ab}.$ The shared key is $K = K_A = K_B = g^{ab}.$ 

\medskip
{\it  Cryptanalysis.} If the group $G$ is linear then by Claim 2 there exists an algorithm that given the public data above finds the shared key $K$ in polynomial time.

In the original version of this cryptosystem  \cite{KLCHKP}    $G$ was proposed to be the Artin braid group $B_n$ on $n$ strings.  R. Lawrence described in 1990 a family of so called Lawrence representations of $B_n.$ Around 2001 S. Bigelow \cite{Big} and D. Krammer \cite{Kr} independently proved that all braid groups $B_n$ are linear. Their work used the  Lawrence-Krammer representations $\rho_n : B_n \rightarrow GL_{n(n-1)/2}(\mathbb{Z}[t^{\pm 1}, s^{\pm 1}])$ that   has been proved faithful for every $n\in \mathbb{N}.$  One can effectively find the image $\rho_n (g)$ for every element $g \in B_n.$ 
Moreover, there exists an effective procedure to recover a braid $g \in B_n$ from its image $\rho_n (g).$ It was shown by J.H. Cheon and B. Jun in \cite{CJ} that it can be done in $O(2m^3log d_t)$ multiplications of entries in $\rho_n (g).$ Here $m = n(n-1)/2$ and $d_t$ is a parameter  that can be effectively computed by $\rho_n (g).$  See \cite{CJ} for details.   Therefore, in this case   there is a polynomial time algorithm to find the shared key $K$ from the public data. The algorithm presented here is more practical. Constructing of a basis is off-line. In every session we have to find  on-line by the Gauss elimination process coordinates of elements in given basis of  vector space.  

Let $Alg(CJ)$ denotes the Cheon-Jun's algorithm, and let $Alg(LD)$ denotes the linear decomposition attack. We can compare these two algorithms. 

\begin{enumerate}
\item  $Alg(CJ)$ is not deterministic because it  look for invertible solutions of underlying sets of linear equations. $Alg(LD)$  is completely deterministic. 
\item   $Alg(CJ)$ works on-line.      $Alg(LD)$ works mostly off-line.
\item  $Alg(CJ)$ deals with bigger sets of variables and equations than $Alg(LD)$ does.
\item $Alg(CJ)$ uses specific embedding  $\mu $,  but $Alg(LD)$ can work with all effective linear representations. Thus $Alg(LD)$ can be used on different platforms. 
\end{enumerate}

\medskip
\paragraph{2) Wang, Cao et. al. key establishment protocol} \cite{WWCOS}.

\medskip
Let $G $ be a  non-commutative monoid. Fix an element $g \in G$.   Let $x$ be an invertible element of $G.$ It is assumed that $G, g, x$ are public. 

\medskip
{\it Algorithm.} Alice picks a private number $s \in \mathbb{N}$ and publishes  $g^{x^s}.$ Bob picks a private number $t \in \mathbb{N}$ and   publishes $g^{x^t}.$

\medskip
{\it Key establishment.} Alice  computes  $K_A = (g^{x^t})^{x^s}= g^{x^{s+t}}.$     Bob computes  $K_B = (g^{x^s})^{x^t} = g^{x^{s+t}}.$
The shared key is  $K = K_A = K_b = g^{x^{s+t}}.$

\medskip
{\it  Cryptanalysis.} If the monoid $G$ is linear then by Claim 2 there exists an algorithm that given the public data above finds the shared key $K$ in polynomial time.

However, in the paper  \cite{WWCOS} the authors used the semigroup $G$ of $3 \times 3$ matrices of $1000$-truncated polynomials in $10$ variables over the ring (not a field) $\mathbb{Z}_{12}$, which does not reduces directly to Claim 2. 

Nevertheless, a slight modification of the linear decomposition attack works in this case as well.

\subsection{Protocols based on left/right multiplication}

\medskip
\paragraph{3) B. Hurley and T. Hurley's  authentication and digital signature  protocols} \cite{HH}, \cite{H}. 

\medskip

Let $G$ be a commutative subgroup of  GL$_n(\mathbb{F}).$ These dates are public.

{\large Algorithm:}

\begin{enumerate}
\item Bob picks   $y \in \mathbb{F}^n$ and  $B \in G$, computes and publishes  $yB$.
\item    Alive wants to send a message $x \in \mathbb{F}^n $ to Bob. She picks  $A_1, A \in G,$  computes and sends $(xA, yBA_1)$ to Bob.
\item Bob picks $B_1, B_2 \in G$, computes and sends  $(xAB_1, yA_1B_2)$ to Alice.
\item Alice computes  $(xB_1, yB_2)$ and sends  $xB_1-yB_2$ to Bob.
\item  Bob computes  $x - yB_2B_1^{-1} $ and recovers $x.$
\end{enumerate}

Bob may use  $yB$ in further transactions.

\medskip
{\it  Cryptanalysis.} Since  the group $G$ is linear then by Claim 3 there exists an algorithm that given the public data above finds the message $x$ in polynomial time. Indeed, let us describe the recovering algorithm.

\begin{enumerate}
\item  By Claim 3 we build a basis of the space Sp$(yBA_1)G.$ Let this basis is   $\{ yBA_1C_1, ..., yBA_1C_r\},$ where $C_i \in G, i = 1, ..., r.$
\item Then we obtain  $yB = \sum_{i=1}^r\alpha_iyBA_1C_i, \alpha_i \in \mathbb{F}.$
\item Swap  $yBA_1$  by $yA_1B_2.$ We have  $\sum_{i=1}^r\alpha_iyA_1B_2C_i = yB_2.$
\item Similarly, we construct a basis  
$xAB_1D_1, ..., xAB_1D_t, D_j \in G, j =1, ..., t,$ of  Sp$(xAB_1)G.$ 
\item Compute $xA = \sum_{i=1}^t\beta_ixAB_1D_i, \beta_i \in \mathbb{F}.$
\item Swap $xAB_1$  by $xB_1 - yB_2.$ We have $\sum_{i=1}^t\beta_i(xB_1-yB_2)D_i = x - yB_2B_1^{-1}.$
\item Swap again  $xAB_1$ by $yB_2$ and get $\sum_{i=1}^t\beta_iyB_2D_i = yB_2B_1^{-1}.$
\item Compute $x.$
\end{enumerate}

\medskip
\paragraph{4) Stickel's key exchange protocol} \cite{Sti}. 

\medskip
Let $G$ be a nonabelian finite group and let  $g$ and $ f$ be two  non-commuting elements of $G.$ Let $k_0$ and $l_0$ be the  orders of $g$ and $f,$  respectively. It is assumed that $G, g, f, k_0, l_0$ are public.

\medskip
{\it Algorithm.} Alice picks two private positive numbers $k$ and $l,$ $1< k< k_0, 1 < l < l_0$,  and  publishes  $g^kf^l.$ Bob picks two private positive numbers 
$r$ and $s,$ $1 < r < k_0, 1 < s < l_0$, and publishes  $g^rf^s.$ 

\medskip
{\it Key establishment.} Alice computes the element $K_A = g^k(g^rf^s)f^l = g^{k+r}f^{l+s}.$ Bob computes the element $K_B = g^r(g^kf^l)f^s = g^{k+r}f^{l+s}.$ The shared key is  $K = K_A = K_B = g^{k+r}f^{l+s}.$

\medskip
{\it  Cryptanalysis.}  If $G \leq Mat_n(\MA)$ (as is the case in \cite{Sti})  then by Claim 3 there exists an algorithm that given the public data above finds the shared key $K$ in  time polynomial in $n, \dim_\F(\MA), k_0, l_0$ and the sizes of $g$ and $f$ (we assume that the field $\F$ is fixed).

\medskip
{\it Remark.}  A similar cryptanalysis applies when $G$ is an arbitrary (not necessary finite) linear group.

\medskip
\paragraph{5) \'{A}lvarez, Martinez' et. al.  key exchange protocol} \cite{AMVZ}, \cite{ATVZ1},  \cite{ATVZ2}. 

\medskip
Given a prime number $p$ and two positive numbers $n, m$  Alice and Bob choose two matrices 
$M_i = \left( \begin{array}{cc}A_i & X_i\\
0 & B_i\end{array} \right),$ for $i=1,2,$ respectively. Here, $A_i \in $ GL$_n(\mathbb{F}_p),$ $B_i \in $ GL$_m(\mathbb{F}_p), $  $X_i \in $ M$_{n \times m}(\mathbb{F}_p).$ 
Let $|M_i |= m_i,$ for $i=1,2,$ be the orders of these matrices, respectively. For any positive number $t$ one has $M_i^t =  \left( \begin{array}{cc}
A_i^t & X_i^{(t)}\\
0& B_i^t\\
\end{array}\right), i = 1,2.$

\medskip
{\it Algorithm.}

Alice picks two private positive numbers $k_i, 1\leq  k_i \leq m_i-1, i = 1,2,$ 
and  publishes  $C = M_1^{k_1}M_2^{k_2}= \left( \begin{array}{cc}
A_C & X_C\\
0& B_C\\
\end{array}\right).$ 

Bob picks two private positive numbers 
$l_i,$  $1 \leq l_i \leq  m_i-1, i = 1,2,$ and publishes $D = M_1^{l_1}M_2^{l_2} =    \left( \begin{array}{cc}
A_D & X_D\\
0& B_D\\
\end{array}\right).$

\medskip
{\it Key establishment.}

Alice computes  $K_A = A_1^{k_1}A_{D}X_2^{(k_2)} + A_1^{k_1}X_DB_2^{k_2} + X_1^{(k_1)}B_DB_2^{k_2}.$

Bob computes $K_B = A_1^{l_1}A_CX_2^{(l_2)} + A_1^{l_1}X_CB_2^{l_2} + X_1^{(l_1)}B_CB_2^{l_2}.$

The shared key is: $K = K_A = K_B .$

It was noted in \cite{VPD} that $K$ is the $(1,2)$ entry of $M_1^{k_1+l_1}M_2^{k_2+l_2}.$

\medskip
{\it  Cryptanalysis.} By Claim 3 there exists an algorithm that given the public data above finds the shared key $K$ in  time polynomial in $n, m, m_1, m_2$ and the sizes of the matrices $M_1$ and $M_2$ (we assume that the field $\F_p$ is fixed).

\medskip
\paragraph{6) Shpilrain-Ushakov's key exchange protocol} \cite{SU4} (see also \cite{MSU1}).  

Here we describe the general (non-twisted) version of the protocol and show that if the platform group is linear then there is an efficient attack to recover the shared key.  There is also a twisted version of the protocol, to which our  cryptanalysis applies  as well, so we omit it here and we leave the details to the reader. Notice, that in the original paper \cite{SU4} the suggested platform is the Thompson group, which is non-linear, so our attack does not apply here.
 
\medskip
Let $G\leq $ M$_n(\mathbf{A})$ be a group (or a submonoid) and let $g$ be an element of $G.$ Let $A$ and $B$ be two public finitely generated subgroups (or submonoids) of $G$ commuting element-wise. 

\medskip
{\it Algorithm.} Alice picks private elements $a, a' \in A$ and publishes  the element $aga'.$ Bob picks private elements $b, b' \in B$  and  publishes the element $bgb'.$ 

\medskip
{\it Key establishment.}
Alice computes  $K_A = abgb'a'.$ Bob computes  $K_B = baga'b' = abgb'a'.$
The shared key is: $K=K_A = K_b = abgb'a'.$

\medskip
{\it  Cryptanalysis.}  By Claim 3 there exists an algorithm that given the public data above finds the shared key $K$ in  time polynomial in $n, \dim_\F(\MA),$ and the sizes of the fixed generating sets of $A$ and $B$ and the sizes of the elements $g, aga',$  and $bgb'$ (we assume that the field $\F$ is fixed).

\medskip
\paragraph{7) Romanczuk-Ustimenko key exchange protocol} \cite{RU}   

\medskip

Let $G = GL_n(\F)$,  where $ \F$ is a finite field.  
Suppose  $C, D \in G$  be two commuting matrices. Fix a vector $g \in \F^n$. All this data is  public. 

\medskip
{\it Algorithm.}  Alice picks a polynomial $P =P(C, D) \in  \F[x, y]$  and publishes  the vector $ gP$.   Bob picks a polynomial $Q = Q(C,D)\in \F[x, y]$  and publishes the vector 
$gQ$. 

\medskip
{\it Key establishment.}
 Alice computes $K_A = (gQ)P = gQP$ and Bob computes $K_B =  (gP)Q =gPQ.$  The shared key is the vector    $   K = K_A = K_B.$ 
 
 \medskip
{\it  Cryptanalysis.}  By Claim 1 there exists an algorithm that given the public data above finds the shared key $K$ in  time polynomial in $n$ and the sizes of $C,D,P,$ and $Q$ (we assume that the field $\F$ is fixed).

\medskip 
{\it Remark.}
Another attack on this protocol, also based on linear algebra, was proposed earlier  by Blackburn at. al. in \cite{BCM}. The main idea of the attack is as follows.

Suppose an adversary Eve knows $g, gP,gQ, C$, and $D$.  Let $X$  be any matrix such that $X$ commutes with $C$ and with $D,$ 
and such that $gQ = gX.$  To find such an $X$ it suffices to solve the corresponding system of linear equations. Now 
Eve can compute the shared key as 
$(gP)X  = gXP  = gQP = K.$

\subsection{Protocols using automorphisms of groups}

\paragraph{ 8) Mahalanobis' key exchange protocol 1} \cite{Mah}

\medskip
Let $G$ be a group and $g \in G$.   Suppose $U, W$ be two finite subsets of $Aut(G)$ commuting element-wise. Denote by $\Phi$ and $\Psi$ the subgroups in $Aut(G)$ generated by $U$ and $W$ respectively.  

\medskip
{\it Algorithm.} 
Alice picks $\phi \in  \Phi $ and publishes $\phi (g).$ Bob picks $\psi \in \Psi $ and publishes  $\psi (g).$

 \medskip
{\it Key establishment.}  Alice computes $K_A = \phi (\psi (g)).$ Bob computes $K_B = \psi (\phi (g)) = \phi (\psi (g)).$
The shared key is $K = K_A = K_B =  \phi (\psi (g)).$

\medskip
{\it Cryptanalysis.}
If $G$ is such that $Hol(G)$ is linear (a subgroup of $Mat_n(\MA)$)   then by Claim 4 there exists an algorithm that given the public data above finds the shared key $K$ in  time polynomial in $n, \dim_\F(\MA)$ and the sizes of $g$ and the elements in $U,W$  (we assume that the field $\F$ is fixed).

In the original paper  \cite{Mah} the author suggested a (finitely generated) non-abelian nilpotent group $G$ as the platform group.  It is known (see, for example, \cite{Mer,Segal,LR}) that the holomorph $Hol(G)$ of every polycyclic group, in particular, every  finitely generated nilpotent group,  admits a faithful matrix representation. Hence the analysis above holds. 

Observe, that  the efficacy of the attack depends on the size of the linear representation of the $Hol(G)$.  Not much is known about the dimensions  of minimal  faithful representations of the holomorphs of nilpotent groups. 

  \medskip
\paragraph{9)  Mahalanobis' key exchange protocol 2} \cite{Mah}

\medskip
We assume the notation above. 

\medskip
{\it Algorithm.} Alice picks $\phi \in  \Phi$ and sends  $g^\phi $ to  Bob.   Bob  picks $\psi \in \Psi $ and  sends $(g^\phi)^\psi  = g^{\phi\psi}$ back  to Alice. Alice computes $\phi^{-1} $ and  gets $g^\psi = g^{\phi \psi \phi^{-1}}$.  Then Alice picks another automorphism $\xi \in \Phi$ and sends $(g^\psi)^\xi  = g^{\psi\xi}$ to Bob.

 \medskip
{\it Key establishment.} Bob computes $\psi^{-1}$ and  gets $((g^\psi)^\xi)^{\psi^{-1} }= g^\xi$ which is his session key. 

\bigskip
{\it Cryptanalysis.} Similar to the case above.

Indeed, assume $Hol(G) \leq Mat_n(\MA)$. Put $v = g^{\phi\psi}$. As in Claim 1 one can find a basis of $Sp(v^\Psi)$, say $v^{c_1}, \ldots, v^{c_t}$ . Notice that $g^{\psi\xi} \in Sp(v^\Psi)$ so one can decompose

\begin{equation}
\label{eq:gamo1}
g^{\psi\xi} = \sum_{i=1}^t \alpha_i v^{c_i}          =  ( \sum_{i=1}^t \alpha_i (g^\phi)^{c_i})^\psi , \  \textrm{for} \  \alpha_i \in \mathbb{F}. 
\end{equation}
Hence, $ (g^{\xi})^\psi  = ( \sum_{i=1}^t \alpha_i (g^\phi)^{c_i})^\psi $, so we derive

\begin{equation}
\label{eq:gmo1}
g^{\xi} =   \sum_{i=1}^t \alpha_i (g^\phi)^{c_i}. 
\end{equation}

\medskip
\paragraph{10)  Habeeb, Kahrobaei, Koupparis and Shpilrain's  key exchange protocol } \cite{HKKS}

\medskip
Let $G$ be a (semi)group, and Aut$(G)$ be the automorphism group of $G.$ Let 
$H(G)$ be the holomorph of $G.$  Fix an element  $g \in G$ and an automorphism $\phi \in $ Aut$(G).$ All this data are public. 

For this paragraph we  write for $g \in G$ and $\mu \in $ Aut$(G)$  the image $\mu (g)$ instead of $g^{\mu}.$  

\medskip
{\it Algorithm.} Alice picks a private number  $m \in \mathbb{N}.$  Then she computes 
$(\phi , g)^m = (\phi^m, \phi ^{m-1}(g) \cdot ... \cdot \phi^2(g) \cdot \phi (g) \cdot g)$
and sends only the second component $a_m = \phi ^{m-1}(g) \cdot ... \cdot \phi^2(g) \cdot \phi (g) \cdot g$  of this pair to Bob. 

 Bob picks a private $n \in \mathbb{N}.$  Then he computes 
$(\phi , g )^n = (\phi^n, \phi ^{n-1}(g) \cdot ... \cdot \phi^2(g) \cdot \phi (g) \cdot g)$
and sends only the second component $a_n = \phi ^{n-1}(g) \cdot ... \cdot \phi^2(g) \cdot \phi (g) \cdot g$  of this pair to Alice. 

\medskip
{\it  Key establishment.}  
Alice computes 
$(\ast , a_n)\cdot (\phi^m,a_m) = (\ast \cdot \phi^m, \phi^m (a_n)\cdot a_m ) = (\ast \cdot \phi^m, K_A).$ Note that she does not actually "compute" $\ast \cdot \phi^m.$

Bob computes  $(\ast \ast ,a_m)\cdot (\phi^n,a_n) = (\ast \ast \cdot \phi^n, \phi^n (a_m)\cdot a_n) = (\ast \ast \cdot \phi^n, K_B).$ Note that he does not actually "compute" $\ast \ast \cdot \phi^n.$

 The shared key is $K = K_A = K_B = a_{m+n}.$ 

\medskip
{\it  Cryptanalysis.} Let   $G \leq \MA ,$ where $\MA$ is a finite dimensional associative algebra over a field $\F$. Assume that the automorphism $\phi $
 is extended to an automorphism of the underlying vector space of $\MA$.  

Using Gauss elimination we can effectively find  a maximal linearly independent subset $L$ of the set $\{a_0, a_1, ..., a_k, ... \},$   where $a_0 = g$ and $a_k = \phi^{k-1}(g) \cdot ... \cdot \phi (g) \cdot g$  for $k \geq 1.$ Indeed, suppose that $\{a_0, ..., a_k\}$ is linearly independent set but 
$a_{k+1}$ can be presented as a linear combination of the form

$$a_{k+1} = \sum_{i=0}^k\lambda_ia_i \  \textrm{for} \   \lambda_i \in \F.
$$

Suppose by induction that $a_{k+j}$ can be presented as above for every $j \leq t-1.$ In particular

$$a_{k+t-1} = \sum_{i=0}^k\mu_i a_i  \  \textrm{for some} \  \mu_i \in \F.$$

Then

$$a_{k+t} = \phi (a_{k + t-1}) \cdot g =    \sum_{i=0}^k\mu_i \phi (a_i) \cdot g =$$
$$\sum_{i= 0}^{k}\mu_i a_{i+1} =  \mu_k\lambda_0a_0 + \sum_{i=0}^{k-1}(\mu_i + \mu_k\lambda_{i+1})a_{i+1}.$$
Thus $L = \{a_0, ..., a_k\}.$  

In particular, we can effectively compute 

$$a_n = \sum_{i=0}^k \eta_i a_i \   \textrm{for some} \  \eta_i \in \F.$$

Then 

$$a_{m+n} = \phi^m(a_n)\cdot  a_m = \sum_{i=0}^k \eta_i \phi^m(a_i) \cdot a_m=$$

$$= \sum_{i=0}^k \eta_i \phi^i(a_m) \cdot a_i.$$

Note that all data on the right hand side is known now. Thus we get the shared key $K = a_{m+n}.$

In the original version of this cryptosystem  \cite{HKKS}    $G$ was proposed to be the  semigroup of $3 \times 3$ matrices over the group algebra $\F_7[\mathbb{A}_5]$,
where $\mathbb{A}_5$ is the alternating group on $5$ elements. The authors of \cite{HKKS}  used an extension of the semigroup $G$ by an inner automorphism which is conjugation by a matrix $H \in $ GL$_3(\F_7[\mathbb{A}_5]).$  Therefore, in this case   there is a polynomial time algorithm to find the shared key $K$ from the public data.

\end{document}